\documentclass[12pt]{article}%
\usepackage{amssymb}
\usepackage{amsfonts}
\usepackage{amsmath}
\usepackage{graphicx}
\usepackage{tikz}
\usepackage{color}%
\providecommand{\U}[1]{\protect\rule{.1in}{.1in}}
\newtheorem{theorem}{Theorem}[section]

\newtheorem{conjecture}[theorem]{Conjecture}

\newtheorem{Fact}[theorem]{Fact}

\begin{document}

\title{\textbf{A Counterexample to an Eternal Domination Conjecture}}

\author{Tom Adamczewski \\ Epoch AI\\London, UK\\{\small tmkadamcz@gmail.com} \and William F. Klostermeyer\\School of Computing\\University of North Florida\\Jacksonville, FL 32224-2669\\{\small wkloster@unf.edu}}
\date{}
\maketitle

\begin{abstract}
A graph $G$ with 243 vertices is shown having $\gamma(G) = \gamma^{\infty}(G) < \theta(G)$, disproving the Gamma-Theta conjecture for eternal dominating sets.
\end{abstract}

\color{red}%
\color{black}%

\section{Definitions}

Let $G=(V, E)$ be a simple, undirected graph, Denote the open and closed neighborhoods of a vertex $x \in V$ by
$N(x)$ and $N[x]$, respectively. A \emph{dominating set }of $G$ is a set $D\subseteq V$ with the
property that for each $u\in V-D$, there exists $x\in D\ $adjacent
to $u$. The minimum cardinality among all dominating sets is the
\emph{domination number}  $\gamma(G)$.  The maximum cardinality among all cliques is the \emph{clique number} $\omega(G)$.

The \emph{clique covering number }$\theta(G)$ is the minimum number $k$ of
sets in a partition $V_{1},...,V_{k}$ of $V$ such that the subgraph $G[V_{i}]$
induced by each $V_{i}$ is a clique. Observe that $\theta(G)$ equals the
chromatic number $\chi(\overline{G})$ of the complement $\overline{G}$ of $G$.
Since $\chi(G)=\omega(G)$ if $G$ is perfect, and $G$ is perfect if and only if
$\overline{G}$ is perfect, it follows that $\alpha (G)=\theta(G)$ for all perfect graphs.

\section{Eternal Domination}

Let $\mathcal{D}_{k}$ be the collection of all dominating sets of $G$ of fixed
cardinality $k$. For a set $D\in\mathcal{D}_{k}$, there is a single guard
located on each vertex of $D$ and therefore we think of $D$ as a configuration
of guards. A vertex $v$ is \emph{occupied} if there is a guard on $v$,
otherwise $v$ is \emph{unoccupied}. We say that a (not necessarily dominating)
set $X$ \emph{protects} a vertex $v$, or $v$ is \emph{protected }(by $X$), if
$v$ or one of its neighbors is occupied.

Each eternal domination problem can be modeled as a two-player game,
alternating between a \emph{defender} and an \emph{attacker}: the defender
chooses $D_{1}\in\mathcal{D}_{k}$ as well as each $D_{i}$, $i>1$, while the
attacker chooses the locations $r_{1},r_{2},\ldots$ of the attacks; we say the
attacker \emph{attacks} the vertices $r_{i}$. The game starts with the
defender choosing $D_{1}$. For $i\geq1$, the attacker attacks $r_{i}\in
V-D_{i}$, and the defender \emph{defends against} the attack by choosing
$D_{i+1}\in\mathcal{D}_{k}$ subject to constraints (described below) that
depend on the particular game. The defender wins the game if they can
successfully defend the graph against any sequence of attacks, including
sequences that are infinitely long, subject to the constraints of the game;
the attacker wins otherwise.


In  the \textbf{eternal domination problem}, $D_{i}\in\mathcal{D}_{k}$ for
each $i\geq1$, $r_{i}\in V-D_{i}$, and $D_{i+1}\in\mathcal{D}_{k}$ is obtained
from $D_{i}$ by moving a guard to $r_{i}$ from an adjacent vertex $v\in D_{i}%
$. If the defender can win the game with the sets $\{D_{i}\}\subseteq
\mathcal{D}_{k}$, then each such $D_{i}$ is an \emph{eternal dominating set}.
The smallest integer $k$ such that $\mathcal{D}_{k}$ contains eternal
dominating sets is the \emph{eternal domination number} $\gamma^{\infty}(G)$.
This problem was first studied by Burger et al. in \cite{BCG2}. As an example, $\gamma(C_{5}
)=\alpha(C_{5})=2$, $\gamma^{\infty}(C_{5})=3$.

We say an attack on an unoccupied vertex $u$ is \emph{defended} by a set
$D\in\mathcal{D}_{k}$ or by (a guard on) a vertex $v\in D$ if both $D$ and
$(D-\{v\})\cup\{u\}$ belong to the collection $\{D_{i}\}\subseteq
\mathcal{D}_{k}$ of eternal dominating sets.

As first observed by Burger et al. \cite{BCG2}, it is easy to see
that $\gamma^{\infty}$ lies between the independence and clique covering numbers.

\begin{Fact}
\label{FactED_Bound}For any graph $G$, $\gamma(G) \leq\alpha(G)\leq
\gamma^{\infty}(G)\leq\theta(G)$.
\end{Fact}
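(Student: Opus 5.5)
We prove the three inequalities separately, each by a direct argument from the definitions.

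\emph{First inequality, $\gamma(G)\le\alpha(G)$.} Take a maximal independent set $I$; one exists in every graph and can be extended from any independent set. Any vertex $u\notin I$ has a neighbor in $I$, since otherwise $I\cup\{u\}$ would be a larger independent set, contradicting maximality. So $I$ is dominating, and $\gamma(G)\le |I|\le \alpha(G)$.

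\emph{Third inequality, $\gamma^{\infty}(G)\le\theta(G)$.} Fix a partition $V_1,\dots,V_k$ of $V$ into cliques with $k=\theta(G)$. Place one guard on an arbitrary vertex of each $V_i$; since each $V_i$ is a clique, this configuration is dominating. We maintain the invariant that each $V_i$ contains exactly one guard. When some $r\in V_i$ is attacked, $r$ is unoccupied, so the guard of $V_i$ sits on another vertex of the same clique, hence adjacent to $r$, and moves to $r$. The invariant persists, so every configuration reached is dominating. Thus $k$ guards defend forever, and $\gamma^{\infty}(G)\le\theta(G)$.

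\emph{Middle inequality, $\alpha(G)\le\gamma^{\infty}(G)$.} This step needs an actual attacker strategy rather than a static argument. Let $I=\{x_1,\dots,x_m\}$ be a maximum independent set, with $m=\alpha(G)$, and suppose the defender uses $k<m$ guards. The attacker repeatedly attacks an unoccupied vertex of $I$; one always exists while fewer than $m$ guards sit on $I$, since $k<m$. The potential is the number of guards located on $I$.
\begin{itemize}
\item[(i)] The response to an attack on $x\in I$ moves a guard from some $v\in N(x)$. Because $I$ is independent, $v\notin I$.
\item[(ii)] Guards are never moved off $I$, because only vertices of $I$ are attacked and the moving guard is always the one entering the attacked vertex.
\end{itemize}
Hence every defended attack raises the potential by exactly one. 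After at most $k$ attacks all $k$ guards lie on $I$, and some $x_j\in I$ is still unoccupied since $k<m$. When $x_j$ is attacked, none of its neighbors carries a guard: all guards are in $I$, and no vertex of $I$ is adjacent to $x_j$. So no guard can move to $x_j$, and indeed the configuration was not even dominating. The defender therefore loses, and $\gamma^{\infty}(G)\ge\alpha(G)$.

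Chaining the three inequalities gives $\gamma(G)\le\alpha(G)\le\gamma^{\infty}(G)\le\theta(G)$.
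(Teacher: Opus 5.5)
Your proof is correct and follows the standard argument for this fact. The paper gives no proof of its own; it attributes the bound to Burger et al.\ as ``easy to see.'' The standard argument is exactly yours: a maximal independent set is dominating, and repeatedly attacking unoccupied vertices of a maximum independent set $I$ forces at least $\alpha(G)$ guards, because responders must come from outside $I$ and never leave it. For the upper bound, one guard per clique of a minimum clique partition defends forever.
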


The Gr\"{o}tzsch graph (which has order $11$)
is the smallest $4$-chromatic triangle-free graph, and its
complement is a graph with $\gamma^{\infty}<\theta$ and
with $\alpha<\gamma^{\infty}<\theta$, see \cite{GHH}.  Virgile cataloged the graphs with 10 and 11 vertices satisfying
$\gamma^{\infty}<\theta$ \cite{vv2}.

\section{The Gamma-Theta Conjecture}

The following conjecture stems from \cite{KM2} and is labeled the Gamma-Theta conjecture in \cite{HHHKM}.

\begin{conjecture}
[The $\gamma-\theta$ Conjecture]\label{theta_conj}For every graph $G$, if
$\gamma(G)=\gamma^{\infty}(G)$, then $\gamma(G)=\theta(G)$.
\end{conjecture}

Conjecture \ref{theta_conj} is known to be true for perfect graphs,
triangle-free graphs, graphs with maximum degree at most three \cite{KM8}, and planar graphs \cite{Ta}.

Let $H$ be the Berlekamp-van Lint-Seidel graph: a strongly regular graph with 243 vertices with each vertex having degree 22. This graph is derived from the from the Perfect Ternary Golay Code. Let $G=(V, E)$ be the complement of this graph.
It is easy to write a short computer program to verify that $\gamma(H) = 3$. See appendix (a similar program shows that there is no dominating set of size two). In fact, since $H$ is strongly regular with parameters (243, 22, 1, 2), $G$ is strongly regular with parameters
(243, 198, 199, 200). The third and fourth parameters indicate the number of common neighbors each pair of adjacent vertices have (third parameter), and
every two non-adjacent vertices have (fourth parameter). Therefore, no two vertices of $G$ can possibly dominate all 243 vertices.

 We now claim that the maximum clique size of $G$ is at most 45. 
 
The adjacency matrix has spectrum
\[
\operatorname{Spec}(G)=\{220^{(1)},4^{(110)},(-5)^{(132)}\}.
\]

$G$  is $220$-regular and its smallest eigenvalue is s=-5. Applying the Delsarte-Hoffman's bound \cite{Del} for the clique number of a regular graph:
\[
\omega(G)\le 1-\frac{k}{s},
\]
where $k$ is the degree and $s$ is the smallest eigenvalue.
Substituting gives
\[
\omega(G)
\le 1-\frac{220}{-5}
=1+44
=45.
\]

Therefore
\[
{\omega(H)\le 45}.
\]

which implies $\theta(G) \geq 5$.

\medskip

We now argue that  $\gamma^{\infty}(G) = 3$, which proves that $G$ is a counterexample to the Gamma-Theta conjecture.

\medskip

\noindent\textbf{Proof}.\hspace{0.1in}
Our  strategy is to keep three guards on a dominating triple of vertices at all times. A variation of the previous program was written that determined that $G$ has 1,987,821 dominating sets of size three. The program verified that for each such set
 $D=\{a, b, c\}$ and every possible unoccupied vertex $r \notin D$, an attack on $r$ can be defended by moving one of $\{a, b, c\}$ to $r$ (across an edge) so that the resulting configuration of guards is a dominating set of size three. Thus every dominating set 
 of size three in $G$ is an eternal dominating set and  $\gamma^{\infty}(G)=3$.
 $\Box$

We leave as an open problem to find smaller graphs with this property. We note our suspicions that such graphs would likely have a lot of `symmetry' like $G$ or like the complements of Kneser graphs used in \cite{GK} used to proved bounds on  $\gamma^{\infty}$.

\section{Contribution Statement}

The counterexample graph was found by GPT-6 Astra in an experiment run by the first author,
Tom Adamczewski, whose contribution to this paper was limited to setting up and conducting this
experiment.

\newpage

\section{Appendix}

\begin{verbatim}
import csv
import itertools

A = []
with open("b.csv", newline="") as f:
    reader = csv.reader(f)
    for row in reader:
        A.append([int(x) for x in row])

n = len(A)

closed_neighborhoods = []

for v in range(n):
    N = {v}
    for u in range(n):
        if A[v][u] != 0:
            N.add(u)
    closed_neighborhoods.append(N)

V = set(range(n))

found = False
for S in itertools.combinations(range(n), 3):

    dominated = (
        closed_neighborhoods[S[0]]
        | closed_neighborhoods[S[1]]
        | closed_neighborhoods[S[2]]
    )
    if dominated == V:
        print("Dominating set of size 3 found:")
        print(S)
        found = True
        break
\end{verbatim}

\end{document}